\newtheorem{theorem}{Theorem}[section]
\newtheorem{corollary}[theorem]{Corollary}
\newtheorem{lemma}[theorem]{Lemma}
\newtheorem{proposition}[theorem]{Proposition}
\theoremstyle{definition}
\theoremstyle{remark}
\newtheorem{rem}[theorem]{Remark}
\theoremstyle{definition}
\newtheorem{example}[theorem]{Example}
\numberwithin{equation}{section}
\newcommand{\set}[1]{\left\{#1\right\}}
\newcommand{\R}{\mathbb R}
\newcommand{\N}{\mathbb N}
\newcommand{\C}{\mathbb C}
\newcommand{\eps}{\varepsilon}
\newcommand{\CH}{{\mathcal H}}
\newcommand{\CL}{{\mathcal L}}
\newcommand{\CM}{{\mathcal M}}
\newcommand{\CN}{{\mathcal N}}
\newcommand{\CP}{{\mathcal P}}
\newcommand{\CQ}{{\mathcal Q}}
\newcommand{\CW}{{W}}
\newcommand{\interior}[1]{{\kern0pt#1}^{\mathrm{o}}}
\begin{document}

\title[]{On the escape rate for intermittent maps with holes shrinking around the indifferent fixed point}
\author{Claudio Bonanno}
\address{Dipartimento di Matematica, Universit\`a di Pisa, Largo Bruno Pontecorvo 5, 56127 Pisa, Italy}
\email{claudio.bonanno@unipi.it}

\author{Sharvari Neetin Tikekar}
\address{Dipartimento di Matematica, Universit\`a di Pisa, Largo Bruno Pontecorvo 5, 56127 Pisa, Italy}
\email{sharvari.tikekar@gmail.com}

\begin{abstract}
We study non-uniformly expanding maps of the unit interval with a parabolic fixed point at the origin that admit an ergodic absolutely continuous invariant measure, which may be finite or infinite. By introducing a hole defined by an interval containing the parabolic fixed point, we analyze the escape rate of the resulting open system and its asymptotic behavior as the hole shrinks. Our approach relies on the transfer operator associated with the dynamical system and on the relationship between the transfer operators of the original system and its induced version. The results extend to this general framework previous investigations which considered special cases.
\end{abstract}

\subjclass[2020]{37E05, 37D25, 37D35, 37A05}
\keywords{Intermittent maps; open dynamical systems; escape rate; shrinking holes}
\thanks{The authors are partially supported by 
the PRIN Grant 2022NTKXCX ``Stochastic properties of dynamical systems'' funded by the Ministry of University and Research, Italy. The authors acknowledge the MUR Excellence Department Project awarded to the Department of Mathematics, University of Pisa, CUP I57G22000700001. This research is part of the authors' activity within the UMI Group ``DinAmicI'' \texttt{www.dinamici.org} and of CB's activity within the Gruppo Nazionale di Fisica Matematica, INdAM, Italy.}

\maketitle

\section{Introduction} \label{sec:intro}

Given an ergodic conservative measure-preserving transformation $F$ of a $\sigma$-finite measure space $(X,\mu)$, a \emph{hole} of the associated dynamical system $(X,\mu,F)$ is a measurable set $H\subset X$ with $\mu(H)>0$ such that when $F^k(x) \in H$ the orbit of $x$ escapes from $X$. The investigation of properties of open system is more than forty years old as the first attempts can be traced back to the late '70s. See  \cite{PY79,FKMP95} and references therein for the first approaches to the problem. This kind of systems has been often used to model different situations of interest for the  physics community (see e.g. \cite{APT13}). 

Let $S_n$ be the set of points which do not escape up to time $n$, and define the \emph{survival probability at time $n$} as $\mu(S_n)$. The sequence $\mu(S_n)$ is decreasing and vanishing as $n\to \infty$, and one can study its exponential rate of convergence, that is 
\[
\gamma_\mu(H) := \limsup_{n\to \infty}\, \frac{-\log \mu(S_n)}{n}\, ,
\]
which is called the \emph{escape rate of $H$ with respect to $\mu$}. In this paper, we are in particular interested in studying the asymptotic behaviour of the escape rate as the hole shrinks to a point.

The escape rate has been studied mainly for hyperbolic systems, we refer to \cite{DY} for references, and rigorous results on its asymptotic behaviour as the hole shrinks are given in \cite{KL} and \cite{PU} by a functional analytic approach. However, it is well known that the statistical properties of dynamical systems can change dramatically when one moves from uniformly hyperbolic systems to \emph{intermittent} ones. These are systems whose orbits alternate between regular and chaotic behaviour, with the ``degree'' of intermittency determined by the duration of the regular phases. The terminology originates from the work of Pomeau and Manneville \cite{PM} in the mathematical physics literature, where such systems were introduced as a simple model for the physical phenomenon of alternation between turbulent and laminar phases in a fluid. The Pomeau-Manneville system is defined as a map of the interval $[0,1]$, in which intermittency arises from the presence of a parabolic fixed point at the origin, while the map is expanding elsewhere. The regular, or laminar, phases of the orbits then correspond to the long time spans spent near the origin.

In this paper, we study parabolic maps of the interval, namely maps sharing the same qualitative features as the Pomeau–Manneville maps. We introduce a hole $H$ and investigate the asymptotic behaviour of the associated escape rate as the hole shrinks to a point. This problem has been considered previously in several settings. In \cite{DF} and \cite{DT}, the hole $H$ is placed far from the parabolic fixed point at the origin, and it is shown that in this case the escape rate decays only at a polynomial rate as the hole shrinks. The case in which the hole contains the origin and shrinks around it is studied in \cite{FMS11}, under assumptions on the degree of intermittency ensuring the existence of an invariant probability measure equivalent to Lebesgue measure (see assumption (H5) in Section~\ref{sec:maps}), and in \cite{KM}, where a family of piecewise linear parabolic maps isomorphic to a countable Markov chain is considered (see Example~\ref{ex:pl} below). A closely related setting is examined in \cite{BC}, which focuses on the Farey map, a particular example of a parabolic map preserving an infinite absolutely continuous measure (see Example~\ref{examples} below), and considers a family of `generalised'' holes shrinking around the origin. In that work, standard holes could not be treated, as the method required the system with a hole to satisfy a form of analyticity up to the origin. Finally, families of intermittent maps with holes are also investigated in the recent paper \cite{BK}, where various ergodic properties of the resulting open systems are studied.

In this paper, we consider the case of a family of holes $H$ containing the origin and shrinking around it for parabolic maps of the interval with any degree of intermittency. The exact asymptotic behaviour of the escape rate is obtained for \emph{Markov holes} (see Section \ref{sec:open}) in Theorem \ref{thm:shrinking}, extending to this case the results in \cite{FMS11,KM,BC}. This result immediately implies bounds for the case of a general hole of the form $[0,\eps]$, with $\eps>0$ (see Corollary \ref{general-hole}). The method of proofs uses the \emph{transfer operator} of a dynamical system and its spectral properties, together with the standard inducing procedure. The main result is Theorem \ref{thm:escape_H_hatH}, which gives an exact relation between the escape rates of a parabolic map and of its induced version (see \eqref{aim-1}). Then, Proposition \ref{prop-utile} is the fundamental step to obtain the asymptotic behaviour of the escape rate of the parabolic map when the hole shrinks around the parabolic fixed point.

The paper is organized as follows. In Section \ref{sec:maps}, we introduce the parabolic maps we consider and their induced version. Then, we recall their symbolic representations and the notion of transfer operators, together with the relations with the existence of invariant measures and their spectral properties. In Section \ref{sec:open}, we introduce the open systems and give the main results.

\section{The maps} \label{sec:maps}
We consider \emph{parabolic maps} $F:[0,1]\to [0,1]$, that is maps satisfying the following properties.
\begin{itemize}
\item[(H1)] $F$ is \emph{piecewise monotone} with respect to a partition $J=\set{J_0,J_1}$ of the interval with $J_{0} :=[0,a]$ and $J_{1} := (a,1]$, for some $a\in (0,1)$, and has \emph{full branches}, that is $F(J_0)=\overline{F(J_1)}=[0,1]$.
\item[(H2)] For $r\in \N \cup \{\infty\}$, $r\ge 1$, the restriction $F_0:=F|_{J_0}$ is $C^r$ on $(0,a)$ and extends to a $C^r$ function on $(0,a]$, $F_1:=F|_{J_1}$ is $C^r$ on $(a,1)$ and extends to a $C^r$ function on $[a,1]$. We denote by $\phi_i := F_i^{-1}$ the local inverses of $F$.
\item[(H3)] $0$ is an \emph{indifferent fixed point}, that is $F(0)=0$ and $F'(x) -1 \sim c\, x^{s}$ as $x\to 0^+$, for some $c>0$ and $s > 0$ (note that $F_0$ is $C^r$ at $0$ only for $s \ge r-1$).
\item[(H4)] There exist $\eps>0$ and $\beta>1$ such that $F'(x)$ is increasing in $(0,\eps)$ and $|F'(x)|\ge \beta$ for all $x\ge \eps$.
\item[(H5)] $F$ admits a unique ergodic conservative invariant measure $\mu$ equivalent\footnote{That is, $\mu$ is absolutely continuous and its density is bounded away from 0.} to the Lebesgue measure $m$.
\end{itemize}

In the case $r\ge 2$, assumption (H5) is implied for example by the boundedness of $|F''|/|F'|^2$ on $[\eps,1)$ for some $\eps>0$. The invariant measure $\mu$ is finite if and only if $s<1$ and its density is bounded from above on all closed sub-intervals non-containing the indifferent fixed point (see \cite{Th1,Th2}).

\begin{example} \label{examples}
A classical example of a family of maps satisfying our assumptions are the so-called \emph{Pomeau-Manneville maps}, introduced and studied in \cite{M,PM} to investigate the phenomenon of intermittency in physics and defined as
\[
F_{pm}(x) := x + x^{1+s} \, \text{(mod $1$),} \quad \text{for $s>0$.}
\]
A somewhat simpler family of maps with the same dynamical properties have been introduced in \cite{LSV} and are now referred to as \emph{Liverani-Saussol-Vaienti maps}. The maps have been defined as
\[
F_{lsv}(x):=\begin{cases}
x(1+2^\alpha\, x^{s}), & \text{for $x\in [0, \frac 12]$,}\\[0.1cm]
2x-1, & \text{for $x\in (\frac 12, 1]$},
\end{cases} \quad \text{for $s>0$.}
\]
These families of maps, together with generalizations, are studied in \cite{BL} in the case $s\ge 1$ as models of infinite measure-preserving interval maps.

In addition, an important example of a parabolic map for $s=1$, which is related with the continued fraction expansion of real numbers, is the \emph{Farey map} defined as
\[
F_f(x) := \begin{cases}
        \frac{x}{1-x}, & \text{for $x\in [0, \frac 12]$,}\\[0.1cm]
        \frac{1-x}{x}, & \text{for $x\in [\frac 12, 1]$}.
    \end{cases}
\]
The Farey map satisfies our assumptions (H1), (H2), (H3), and (H5). Assumption (H4) is not satisfied since $F'_f(1)=-1$. However, the results in this paper apply to the Farey map by small changes.
\end{example}

By now, a standard way to study parabolic maps of the interval is inducing on the sets where the dynamics is expanding. Considering $(a,1]$ as the reference set, we define $\tau : [0,1] \rightarrow \N$ as,
\[
        \tau(x) := 1 + \inf \set{ i \ge 0 \, : \, F^{i}(x) \in (a,1]}, \quad x \in [0,1]. 
\]    
Note that $\tau -1$ is in fact the hitting time of a point to $(a,1]$ and for all $x > 0$, $\tau(x)$ is finite. Define the \emph{jump transformation} $G : [0,1] \to [0,1]$ of $F$ by,
\[
    G(x) := \begin{cases}
        F^{\tau(x)}(x), \ & \ \text{if} \ x \neq 0, \\
        0, \  & \ \text{if} \ x = 0.
    \end{cases}
\] 
Let $(a_{n})_{n \ge 0} \downarrow 0$ denote the following sequence of pre-images of $1$,
\begin{equation} \label{preimages}
        a_{0}:=1, \ \ \ a_{1}:= a, \ \ \ \text{and} \ \ \   a_{n}:=\phi_{0}(a_{n-1}), \  n \ge 2. 
\end{equation} 
For every $n\ge 1$, the interval $A_{n}:= (a_{n}, a_{n-1}] = \{ x \in [0,1] : \tau(x) = n \}$ is the level set of $\tau$. Observe that on the partition $\{ A_{n}\}_{n \ge 1}$ the jump transformation $G$ has countably many smooth monotone full branches, $\overline{G(A_n)}=[0,1]$, and it is expanding, that is $|G'(x)| \ge \beta$ for all $x\in \cup_n \interior{A}_n$, with $\interior{A}_n=(a_{n}, a_{n-1})$. In particular,
\begin{equation} \label{GAn}
G|_{A_n} = F_1 \circ F_0^{n-1}, \  n \ge 1, 
\end{equation}
and $G$ admits the local inverses 
\begin{equation} \label{GAn-inv}
\zeta_n := (G|_{\interior{A}_n})^{-1} : (0,1) \to \interior{A}_n, \quad \zeta_n = \phi_0^{n-1} \circ \phi_1, \  n \ge 1.
\end{equation}
We recall that the jump transformation $G$ of the Farey map is the well-known \emph{Gauss map}, related to the continued fraction expansion of real numbers.

\subsection{The symbolic representation} \label{sec:symb}
There is a natural way to encode the orbits of $F$ and its jump transformation $G$. Using the fact that $J$ forms a Markov partition of $[0,1]$ for $F$, given the set $\Omega = \{0,1\}^{\N_{0}}$ of infinite strings on two symbols, we define the coding $\pi : \Omega \to [0,1]$ as below. For $\omega = (\omega_{i})_{i \ge 0} \in \Omega$, define,
    \[
        \pi(\omega) := x \ \ \text{iff} \ \ F^{i}(x) \in J_{\omega_{i}} \ \ \forall j \ge 0.
    \]
Fixed $\theta \in (0,1)$, we endow $\Omega$ with the standard metric described as, 
    \[
        d_{\theta}(\omega,\omega') = \theta^{\inf \{ i \ge 0 : \omega_{i} \neq \omega'_{i} \}}, \quad  \omega, \omega' \in \Omega.
    \]
Then, $\pi$ is a homeomorphism on $[0,1] \setminus \cup_{i\ge 1} F^{-i}(1)$, and letting $T$ denote the left shift map on $\Omega$ defined by $(T\omega)_{i} = \omega_{i+1}$, $i\ge 0$, $\pi$ is a conjugacy, modulo a countable set, between the system $\left([0,1], F \right)$ and the well-known \emph{full shift on two symbols} $\left( \Omega , T \right)$. In fact, $\pi \circ T = F \circ \pi$. 

We call \emph{word} a finite string $u \in \cup_{l \ge 1} \{ 0,1\}^{l}$. In the following, we let $0^l$, $l\ge 1$, denote the word $00\dots 0$ of $l$ consecutive $0$'s, and $0^{\infty}$ denote the constant string $00\dots \in \Omega$. For any word $u$ of length $|u| = l \ge 1$,
\[
[u]:=\{ \omega \in \Omega : \omega_{0} = u_{0}, \dots, \omega_{l-1} = u_{l-1} \}
\]
denotes the cylinder in $\Omega$ of infinite strings containing $u$ has initial word. Note that 
\[
\pi([0^i]) = [0,a_i], \ \ i\ge 1, \quad \pi(0^\infty) = 0.
\]

Similarly to the construction of $G$, we consider the cylinder $[1]\subset \Omega$ as the reference set, and we look at the level sets $A_{n}$ of the function $\tau : \Omega \to \N$ defined as\footnote {We use the same notations in this setting as we did for $([0,1],F)$, as these are essentially the same sets under $\pi$.}
\begin{equation} \label{hitting-time-symb}
         \tau(\omega) := 1 + \inf \{ i \ge 0 \, : \, \omega_{i} = 1 \} , \ \ \ \omega = \omega_{0} \omega_{1} \omega_{2} \dots .
\end{equation}
As before, $\tau - 1$ is the hitting time of $\omega$ to $[1]$, and $\tau$ is finite on $\Omega \setminus\{0^{\infty}\}$. For $n \ge 1$, the level set $A_{n} := \{ \omega \in \Omega : \tau(\omega) = n\}$  is the cylinder $[0^{n-1} 1]$ and $\{A_{n}\}_{n \ge 1}$ form a partition of $\Omega \setminus\{0^{\infty}\}$. Then $T^{\tau} : \Omega \setminus \{ 0^{\infty}\} \to \Omega$ is the jump transformation of $T$. 

For $k \ge 1$ and $\omega \in \Omega \setminus \{ 0^{\infty}\}$ define $\tau_{k} : \Omega \setminus \{ 0^{\infty}\} \to \N$ by,
\[
\tau_0(\omega) := \tau(\omega)\, ,\quad  \tau_{k}(\omega) := 1+ \inf \{ i \ge \tau_{k-1} : \omega_{i} = 1\}, \ \text{for } k \ge 1.
\]
Note that $(\tau_{k}-1)_{k \ge 0}$ is the sequence of successive hitting times of $\omega$ to $[1]$. For a given $\omega \in \Omega \setminus \{0^{\infty}\}$, $\tau_{k}(\omega) < \infty$ for all $k \ge 1$ if and only if $\omega \notin \bigcup_{i \ge 1} T^{-i}(0^{\infty}) $. Denote by 
\[
\Omega_{0}:= \Omega \setminus \bigcup\limits_{i \ge 0} T^{-i}(0^{\infty}).
\]
For $\omega \in \Omega_{0} $ and $k \ge 0$ define, $\sigma_{k} : \Omega \to \N$ by
    \[
        \sigma_{0} \equiv \tau_{0} \ \ \text{and} \ \  \sigma_{k} = \tau_{k} - \tau_{k-1}, \ \ \text{for} \ \ k \ge 1.
    \] 
which describes the sequence of times between passages into $[1]$. 

For $\Sigma := \N^{\N_0}$, endowed with the metric $d_\theta$, let $(\Sigma,S)$ denote the full shift on countably many symbols. Let 
\begin{equation} \label{iota}
       \iota : \Omega_{0} \longrightarrow \Sigma\, ,\quad  \iota(\omega):= \sigma =(\sigma_{0}(\omega) \sigma_{1}(\omega) \dots) \in \Sigma , \ \ \text{for} \ \ \ \omega \in \Omega_{0} .
\end{equation}
which is a homeomorphism and makes the following diagram commute.
    \[
        \begin{tikzcd}
         \Omega_{0} \arrow[r, "T^{\tau_{0}}"] \arrow[d, " \iota "]
        & \Omega_{0} \arrow[d, "\iota"] \\
        \Sigma \arrow[r, "S" ]
        & \Sigma
        \end{tikzcd}
     \]
Observe that the jump transformation $([0,1], G)$ is conjugate to the full shift on countable symbols $(\Sigma, S)$. In particular, $\iota([A_{n}]) = [n]$ for every $n \ge 1$.

\subsection{Transfer operators}
Finally, we recall the notion of \emph{transfer operator} for the maps $F$ and $G$, and their symbolic counterparts, $T$ and $S$. 

The transfer operator for $F$ is defined as the linear operator which sends a function $f$ on $[0,1]$ to the function
\[
(\CL f)(x) := \sum_{F(y)=x}\, \frac{1}{|F'(y)|}\, f(y)\, .
\]
This operator is well defined when the sum is finite or, in general, convergent. In our case, for each $x\in (0,1)$ there are two pre-images, $\phi_0(x)$ and $\phi_1(x)$. Hence,
\[
        (\CL f)(x) := |\phi'_{0}(x)|\, f(\phi_{0}(x)) + |\phi'_{1}(x)| \,f(\phi_{1}(x)).
\]
In the following, it will be useful to consider separately the contributions to $\CL f$ coming from the interval $J_0$ and $J_1$, hence we introduce the notation
\[
        (\CL_0 f)(x) := |\phi'_{0}(x)|\, f(\phi_{0}(x))\, ,\quad (\CL_1 f)(x) := |\phi'_{1}(x)| \,f(\phi_{1}(x)),
\]
so that $\CL f = \CL_0 f + \CL_1 f$.

For the jump transformation $G$, using \eqref{GAn} and \eqref{GAn-inv}, we define the the transfer operator-valued power series, denoted by $\CM_z$, as,
\[
        (\CM_z g) (x) = \sum\limits_{n\ge 1} z^n\, |\zeta'_{n}(x)| g(\zeta_{n}(x)),\quad z\in \C
\]
which, for $g\in C([0,1])$, is convergent when $|z|\le 1$. As above, we introduce the notation $\CM_n$ for the contribution to $\CM_z$ corresponding to the branch on $A_{n}$. That is,
\[
        (\CM_n g)(x) := |\zeta'_{n}(x)|\, g(\zeta_{n}(x))\, , \, n\ge 1,
\]
and $\CM_z f = \sum_{n\ge 1}\, z^n \CM_n f$.

The relation \eqref{GAn-inv} between the inverse branches of $F$ and $G$ implies
\[
z^n \CM_n = \CL_1 \circ \CL_0^{n-1}, \quad n\ge 1,
\]
from which the following identity follows
\begin{equation}\label{rel-to-FG}
(1-\CM_z)\, (1-z\CL_0) = 1-z\CL.
\end{equation}
This relation has already been used in \cite{BI} for the Farey and the Gauss maps.

Using the symbolic representations of $F$ and $G$, we introduce the transfer operators for the shifts $(\Omega,T)$ and $(\Sigma, S)$. For simplicity, we use the same notation $\CL$ and $\CM_z$.

On the shift space $(\Omega, T)$, we obtain the transfer operator $\CL$ defined as follows.
\[
        (\CL f)(\omega) := \exp ({V}(0\omega)) \,f(0\omega)+\exp (V(1\omega)) \, f(1\omega),
\]
where $V : \Omega \to \R$ denotes the potential,
\begin{equation}\label{pot-V}
        V(\omega) := \log (|\phi'_{\omega_{0}}(\pi(\omega_{1} \omega_{2} \dots))|),  \ \ \text{ for } \  \omega= \omega_{0} \omega_{1} \omega_{2} \dots .
\end{equation}
As above, $\CL_0$ and $\CL_1$ denote the two terms in $\CL$ corresponding to the two inverse branches of $T$ on $[0]$ and $[1]$, respectively.
 
For the jump transformation $T^{\tau_0}$, the potential $W:\Omega \setminus\{0^{\infty}\} \to \R$ to use is the \emph{induced potential of $V$} defined as, 
\begin{equation}\label{eq:jump_potential}
        W(\omega) := \sum\limits_{i=0}^{\tau_{0}(\omega)-1}V(T^{i}(\omega)).
\end{equation}
 Further, we need to consider the corresponding potential $\iota *W := W \circ \iota^{-1}$ on $\Sigma$. For simplicity of notation, we will denote this potential still as $W$. It will be clear from the context which potential is considered. 
 
 The transfer operator-valued power series for the shift map $S$ is then defined as
\[
        (\CM_z g) (\sigma) := \sum\limits_{i\ge 1} z^i\, \exp( {W}(i\sigma)) \, g(i\sigma) = \sum\limits_{i \ge 1} z^i\, (\CM_{i}g)(\sigma), \quad z\in \C.
\]
By use of the homeomorphism $\iota : \Omega_{0} \longrightarrow \Sigma$, we can lift $\CM_z$ to an operator-valued series acting on functions defined on $\Omega_0$. In the following, we use the same notation for the two operators, as the context will make it clear the underlying space being considered. In particular, we will make use of  relation \eqref{rel-to-FG} also in the symbolic setting.

The functional analytic properties of the transfer operators $\CL$ and $\CM_z$ depend on the properties of the maps $F$ and $G$. First, we note that assumptions (H2)-(H5) on $F$ imply the following properties for the potential $V$.
\begin{itemize}
\item[(V1)] There exists $0 < C_1 < \infty$ such that $-C_1 \le V(\omega) \le 0$ for all $\omega \in \Omega$, $V(0^{\infty}) = 0$.
\item[(V2)] The \emph{topological pressure} of $V$
\begin{equation} \label{pressure}
\CP(V) := \lim_{n\to \infty}\, \frac 1n\, \log\, \sum_{\omega_0,\omega_1,\dots, \omega_{n-1}}\, \exp \Big( \sup_{\omega'} \sum_{j=0}^{n-1}\, V(\omega_j\dots\omega_{n-1}\omega') \Big)
\end{equation}
satisfies $\CP(V)=0$.
\end{itemize}
In particular, (V2) holds because the transfer operator $\CL$ acting on $L^1(m)$, with $m$ the Lebesgue measure, has spectral radius 1.

In addition, we assume the properties below for the potential $W$.

\begin{itemize}
    \item[(W1)] $W$ is locally H\"older continuous on $\Sigma$, that is, there exists $0 < C_2 < \infty$ such that for all $n \ge 1$ and the constant $\theta \in (0,1)$ defining the metric $d_\theta$,
    \[ 
        var_{n}(W) := \sup \lbrace |W(\sigma) - W(\sigma')|: \inf \{ j \ge 0 \ : \ \sigma_{j} \neq \sigma'_{j} \} = n \rbrace \le C_2 \theta^{n}. 
    \]
   \item[(W2)] $W$ is {\it summable}, that is, there exists $0 < C_3 < \infty$ such that 
    \[ 
        \sum\limits_{n \in \Sigma} \exp \left( \sup\limits_{\sigma \in [n]} W(\sigma) \right) < C_{3}.
    \] 
\end{itemize}
 
Let $\CH_{\theta}(\Sigma)$ be the set of of complex valued bounded Lipschitz continuous (with respect to the metric $d_{\theta}$) functions on $\Sigma$. For $g\in \CH_{\theta}(\Sigma)$, let
\[
|g|_{\theta} := \sup \set{\frac{var_{n}g}{\theta^{n}} \ : \ n \ge 1},
\]
and set
\[
        \| g \|_{\theta} := \|g\|_{\infty} + |g|_{\theta},
\]
where $\|\cdot \|_{\infty}$ is the supremum norm. Then, $(\CH_{\theta}(\Sigma), \|  \cdot \|_{\theta})$ is a Banach space.

We note that the potential $\CW$ is not bounded below, and therefore $\CW \notin \CH_{\theta}(\Sigma)$. However, by the virtue of (W1)-(W2), the function $\exp \CW$ belongs to $ \CH_{\theta}$.

To conclude this section, we recall the spectral properties of the power series $\CM_z$ when acting on $\CH_{\theta}(\Sigma)$ that we need in the following. We use the notation $\CP(z)$ for the topological pressure, defined as in \eqref{pressure}, of the potential
\[
\CW_z(\sigma) := \CW(\sigma) + \sigma_0\, \log z\, .
\]

\begin{theorem}[Ruelle-Perron-Frobenius Theorem for $\CM_z$ \cite{Is}] \label{thm:RPF_M}
Assuming (V1)-(V2) and (W1)-(W2), for $|z|\le 1$ and $\CM_z$ acting on $\CH_{\theta}(\Sigma)$, the following hold:
\begin{itemize}
\item[(i)] The spectral radius of $\CM_z$ is bounded above by $\exp \CP(|z|)$.
\item[(ii)] There is at most one eigenvalue of modulus $\exp \CP(|z|)$ which is simple, and exactly one if $z$ is real and positive.
\item[(iii)] $\CP(1)=0$ and, if $z\not= 1$, then $1$ is not an eigenvalue of $\CM_z$.
\end{itemize}
Let us now consider the special case $z=1$ and let $\CM:= \CM_1$.Then, $1$ is the largest simple eigenvalue of $\CM$ with positive eigenfunction $h \in \CH_{\theta}(\Sigma)$ bounded from above and away from 0. The dual operator $\CM^*$ has a corresponding probability eigenmeasure $\nu \in \CH_{\theta}^{*}(\Sigma)$ with eigenvalue $1$. That is,
        \[
        \CM h = h \ \ \text{and} \ \ \CM^{*} \nu= \nu.
        \]
The measure $\rho := h\nu$ is the $S$-invariant Gibbs measure corresponding to the potential $\CW$ on $\Sigma$, that is, there exists a constant $0<C_4<\infty$ such that, for all $k\ge 1$ and $\sigma_0,\dots,\sigma_{k-1} \in \Sigma$,
\[
\frac{1}{C_4} \le \frac{\rho([\sigma_0 \sigma_1 \dots \sigma_{k-1}])}{\exp \sum_{j=0}^{k-1}\, \CW(S^j \sigma)} \le C_4\, ,
\]
for all $\sigma \in [\sigma_0 \sigma_1 \dots \sigma_{k-1}]$ (recalling that $\CP(\CW)=\CP(1)=0$).\\ 
In addition, the function $e:= \sum_{j\ge 0} \CL_0^j h$ satisfies $\CL e =e $, and the measure $\mu:= e \iota^*(\nu)$ on $\Omega_0$ is $T$-invariant\footnote{Here, $\iota^*(\nu)$ denotes the pull-back of the measure $\nu$ to $\Omega_0$ by \eqref{iota}. Moreover, the measure $\mu$ coincides, up to the homeomorphism $\pi$, with the invariant measure in (H5).}. 
\end{theorem}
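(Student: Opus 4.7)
My plan is to break the theorem into three pieces: a parametric Ruelle-Perron-Frobenius statement for the countable Markov shift $(\Sigma,S)$, the normalisation $\CP(1)=0$, and the lift from $\CM$-eigendata back to $\CL$-eigendata through the algebraic identity \eqref{rel-to-FG}.

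For the first piece I would appeal to the generalised Ruelle-Perron-Frobenius theorem for locally H\"older summable potentials on countable Markov shifts (Mauldin-Urbanski, Sarig), in the form already used by Isola \cite{Is}. The key observation is that for $|z|\le 1$ the tilted potential $\CW_z=\CW+\sigma_0\log z$ still satisfies (W1)--(W2): the added term depends only on the first coordinate, so $var_n(\CW_z)=var_n(\CW)$ for every $n\ge 1$, and $|z|\le 1$ preserves summability. This directly gives (i), and for $z\in(0,1]$ produces a simple leading eigenvalue $\exp\CP(z)$ with positive eigenfunction $h_z\in\CH_\theta(\Sigma)$ bounded away from $0$ and $\infty$ and a probability eigenmeasure $\nu_z$. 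The peripheral statements in (ii) and the ``$z\neq 1$'' half of (iii) follow by the standard phase argument: if $\CM_z g=\lambda g$ with $|\lambda|=e^{\CP(|z|)}$, then pointwise $|g|\le\CM_{|z|}|g|$, and integration against $\nu_{|z|}$ forces equality; writing $g=|g|\,e^{i\varphi}$ aligns the arguments of the countably many branches and leads to a cohomological identity forcing $z=1$ as soon as $|z|=1$. That this identity has no nontrivial solutions is where the topological mixing (in fact, full-shift structure) of $(\Sigma,S)$ genuinely enters.

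Next, I would establish $\CP(1)=0$ via Abramov's inducing formula: since $\CW$ is the first-return potential of $V$ along $\tau$ and $\CP(V)=0$ by (V2), the identity $\CP(\CW-\CP(V)\tau)=0$ collapses to $\CP(\CW)=0$. Equivalently, this normalisation can be read off \eqref{rel-to-FG} at $z=1$ by matching spectral radii on both sides and using that $\CL$ has spectral radius $1$ as a consequence of (H5).

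For the final part, I would set $e:=\sum_{j\ge 0}\CL_0^j h$, interpreted pointwise away from $0^\infty$; convergence there is clear because $h$ is bounded and the iterates $\CL_0^j$ push mass toward the indifferent fixed point. Telescoping gives $(1-\CL_0)e=h$, and the relation $\CL_1=\CM(1-\CL_0)$ one reads off from $z^n\CM_n=\CL_1\CL_0^{n-1}$ yields
\[
\CL e=\CL_0 e+\CL_1 e=\CL_0 e+\CM h=(e-h)+h=e.
\]
The $T$-invariance of $\mu:=e\,\iota^*(\nu)$ then follows by duality from $\CL^*\mu=\mu$. The principal obstacle is the unboundedness of $\CW$ from below, inherent to the parabolic fixed point, which puts us outside the classical compact-shift RPF framework and forces us to rely on the summable-potentials machinery; a secondary subtlety is the phase-alignment argument for peripheral eigenvalues when $|z|=1$, which genuinely exploits the full-shift structure of $\Sigma$ and the summability ensuring that $\CM_z$ remains bounded on $\CH_\theta(\Sigma)$.
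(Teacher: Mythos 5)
The paper states this theorem without proof, attributing it to Isola \cite{Is}, and your reconstruction follows exactly the route of that reference: Ruelle--Perron--Frobenius for the full countable shift with a summable locally H\"older potential to get (i)--(iii) and the eigendata $(h,\nu)$, Abramov to pin down $\CP(1)=0$, and the operator identity $\CM(1-\CL_0)=\CL_1$ to lift $\CM h=h$ to $\CL e=e$. Two small points are worth tightening: the convergence of $e=\sum_{j\ge 0}\CL_0^j h$ is not merely \virg{clear} but is precisely where (W2) enters, since on $A_k$ one has $\CL_0^j h \le \|h\|_\infty \exp\big(\sup_{[k+j]}W - \inf_{[k]}W\big)$, which is summable in $j$ by (W2); and the $T$-invariance of $\mu=e\,\iota^*(\nu)$ follows from $\CL e=e$ together with $\CL^*\iota^*(\nu)=\iota^*(\nu)$ (which one deduces from $\CM^*\nu=\nu$ via $(1-\CL_0^*)\CM^*=\CL_1^*$), rather than from $\CL^*\mu=\mu$ as you wrote. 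Neither affects the substance: your argument is correct and matches the cited proof.
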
 
 
\section{The open systems and the escape rate} \label{sec:open}

For the parabolic maps $F$ of the interval defined in the previous section, we now introduce the \emph{Markov holes} containing the indifferent fixed point. Recalling the sequence $\{a_n\}_{n\ge 0}$ defined in \eqref{preimages}, let $H:=[0,a_{N}]$ for fixed $N\ge 2$, which can be written as $H =\bigcup_{n\ge N+1}A_{n}$, a collection of contiguous elements of the partition $\{A_{n}\}_{n \ge 1}$. Then the corresponding hole in the symbolic setting $(\Omega,T)$ is simply the union of cylinders of the form $H= \bigcup_{i \ge N} [0^{i} 1]$ in $\Omega$. Denote by $\Omega_{H} := \Omega \setminus \bigcup_{i \ge 0} T^{-i}(H)$, the survivor set. 

In the induced symbolic system $(\Sigma, S)$, the hole becomes 
\[
\hat{H}:=\iota(H) = \{ N+1, N+2, \dots \} =\bigcup\limits_{k \ge N+1}[k] \subset \Sigma,
\] 
and the survivor set is denoted by $\Sigma_{N} := \{ \sigma \in \Sigma : \sigma_{i} \le N \ \forall i \ge 0\}$. The set $\Sigma_{N}$ is closed under the action of $S$, and the system $(\Sigma_{N},S)$ is a full shift over the compact finite symbol set $\{ 1, 2, \dots, N \}$. The survivor sets are topologically conjugate since the following diagram commutes
    \[
    \begin{tikzcd}
     \Omega_{H} \arrow[r, "T^{\tau_{0}}"] \arrow[d, " \iota "]
    & \Omega_{H} \arrow[d, "\iota"] \\
    \Sigma_{N} \arrow[r, "S" ]
    & \Sigma_{N}
    \end{tikzcd}
     \]
    
Our aim is to compute the \emph{escape rate} $\gamma_\mu(H)$ of the hole $H$ with respect to the measure $\mu$ of assumption (H5), that is
\begin{equation}\label{escape-rate-F}
\gamma_\mu(H) := \limsup_{n\to \infty}\, \frac {-\log \Big( \mu\set{x\in [0,1]\, :\, F^j(x) \not\in H\, ,\, \forall\, j=0,\dots, n-1} \Big)}{n}\, .
\end{equation}
By the assumptions on the maps $F$, the measure $\mu$ has bounded density on $[0,1]\setminus H$, therefore the escape rate $\gamma_\mu(H)$ coincides with $\gamma_m(H)$, with $m$ the Lebesgue measure. Our main result shows the asymptotic behavior of $\gamma_\mu(H)$ as $H$ shrinks around 0, that is as $N\to \infty$.

First, the homeomorphism $\pi$ implies that the escape rate $\gamma_\mu(H)$ is the same in the symbolic setting. That is, we consider in \eqref{escape-rate-F} the measure $\mu$ on $\Omega_0$ of Theorem \ref{thm:RPF_M} and the set of strings $\omega\in \Omega_0$ for which $T^j(\omega) \not\in \cup_{i \ge N} [0^{i} 1]$ for all $j=0,\dots,n-1$.
    
Then, following \cite{DY} and \cite[Section 7.1]{DT}, the escape rate $\gamma_\mu(H)$ may be computed by using the transfer operator of the open system. Therefore, we introduce the transfer operator $\CQ : \CH_{\theta}(\Omega) \to \CH_{\theta}(\Omega)$ and the transfer operator-valued series $\CN_z:\CH_{\theta}(\Sigma) \to \CH_{\theta}(\Sigma)$ for the open systems in $\Omega$ and $\Sigma$ respectively, given by,
\[
\begin{aligned}
\CQ f := &\, \CL ((1-\chi_{H})f) = \CL_{0} ((1-\chi_{H})f) + \CL_{1}f, \\[0.2cm]
\CN_z g = &\,  \CM_z((1-\chi_{\hat{H}})g) = \sum\limits_{i=1}^{N} z^i\, \CM_i g = \sum\limits_{i=1}^{N} z^i\, (\CQ_1 \circ \CQ_0^{i-1}) g, \quad z\in \C,
\end{aligned}
\]
where $\CQ_{0}f:= \CL_{0} ((1-\chi_{H})f)$, $\CQ_{1}:= \CL_{1}$. In the following we denote $\CN_i := \CM_i$ for $1 \le i \le N$. Standard computations show that \eqref{rel-to-FG} holds also in the open setting, hence 
\begin{equation}\label{eq:relation_N_Q}
    (1-\CN_z)(1-z\CQ_{0}) = 1-z\CQ.
\end{equation}

Outside of the hole $H$, the maps $F$ are expanding, hence we expect the transfer operator $\CQ$ on $\CH_{\theta}(\Omega)$ to have a maximal real eigenvalue $\lambda_{\CQ} = \exp \CP(V|_{\Omega_H})$, where $V|_{\Omega_H}$ is the potential \eqref{pot-V} restricted to the survivor set $\Omega_H$ and $\CP$ denotes the pressure \eqref{pressure}, and a spectral gap. Therefore, by \cite{DY} and \cite[Section 7.1]{DT}, we have
\begin{equation}\label{escape-rate-T-to}
\gamma_\mu(H) = \CP(V) - \CP(V|_{\Omega_H}) = -\log \lambda_{\CQ}\, ,
\end{equation}
where we use that $\CP(V)=0$.

The same relation holds for the jump transformation. We know due to \cite{Tanaka}, that $\CN:= \CN_1$ has a largest simple real eigenvalue $\lambda_{N} < 1$ which also satisfies $ \lambda_{N} = \exp \CP(\CW|_{\Sigma_{N}})$, where $\CW|_{\Sigma_{N}}$ is the potential \eqref{eq:jump_potential} projected on $\Sigma$ and restricted to the survivor set $\Sigma_N$. Further, the escape rate into $\hat{H}$ with respect to the measure $\rho$ of Theorem \ref{thm:RPF_M}, is the difference of topological pressures as below,
\begin{equation}\label{escape-rate-S-to} 
        \gamma_\rho(\hat{H}) = \CP(\CW) - \CP(\CW|_{\Sigma_{N}}) = -\log \lambda_{N}. 
\end{equation}
where again $\CP(\CW)=0$.

Making use of the fact that the survivor set $\Sigma_{N}$ is the full shift over finitely many symbols, we consider the transfer operator $\CN$ acting on the function spaces on the survivor set, that is, $\CN:\CH_{\theta}(\Sigma_{N}) \to \CH_{\theta}(\Sigma_{N})$. Here the classical Ruelle-Perron-Frobenius theorem holds (see e.g. \cite{PP}) and $\lambda_{N}$ is the leading simple eigenvalue of $\CN$ acting on $\CH_{\theta}(\Sigma_{N})$. Let $h_{N} \in \CH_{\theta}(\Sigma_{N})$ and $ \nu_{N} \in \CH_{\theta}^{*}(\Sigma_{N})$ denote the corresponding eigenfunction and eigenmeasure of $\CN$ respectively, that is,
\begin{equation}\label{rpf-sigmaN} 
        \CN h_{N} = \lambda_{N}h_{N} \ \ \text{and} \ \ \CN^{*}\nu_{N} = \lambda_{N}\nu_{N}.
 \end{equation}
Recall that $\nu_{N} $ is an $S$-invariant probability measure on $\Sigma_{N}$ and the measure $\rho_{N} := h_{N} \nu_{N}$ is the Gibbs measure for $\CN$ for the potential $\CW|_{\Sigma_{N}}$ on $\Sigma_{N}$. 

In the next results, we prove the relation between $\gamma_\mu(H)$ and $\gamma_\rho(\hat{H})$ which connects the original system with its accelerated version.

\begin{theorem} \label{thm:escape_H_hatH}
    Let's consider the parabolic dynamical system on the interval satisfying (H1)-(H5) and (W1)-(W2), and fix a Markov hole $H:=[0,a_N]$ for $N \ge 2$, with $a_N$ defined as in \eqref{preimages}. Then, use the symbolic representation introduced in Section \ref{sec:symb} and above for the open system. Let $h_{N}, \nu_{N}$ be the eigenfunction and eigenmeasure of $\CN$ on $\CH_{\theta}(\Sigma_{N})$ as in \eqref{rpf-sigmaN}, and $\rho_{N} := h_{N} \nu_{N}$. Define $e_N := \sum_{k=0}^{N-1} \CQ_{0}^{k}h_{N}$. Then,
     \begin{itemize}
         \item[(i)] The measure $\mu_{N} := e_N \iota^*(\nu_{N})$ is a finite $T$-invariant measure supported on $\Omega_{H}$.
         \item[(ii)] The corresponding probability measure $\hat{\mu}_{N} := \mu_{N}/\mu_{N}(\Omega)$ is an equilibrium state for the potential $V|_{\Omega_{H}}$ on the survivor set. In particular, the escape rates into the hole $H$ in $\Omega$ and $\hat{H}$ in $\Sigma$ satisfy 
         \begin{equation} \label{aim-1}
             \gamma_\mu(H) = \frac{\gamma_\rho(\hat{H})}{\sum\limits_{k=1}^{N} k \cdot \, \rho_{N}([k])}
         \end{equation}
     \end{itemize}
\end{theorem}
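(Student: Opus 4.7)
The plan is to adapt the construction from Theorem~\ref{thm:RPF_M} to the open setting by identifying $\mu_N$ with a tower measure over the $S$-invariant Gibbs measure $\rho_N$ on $\Sigma_N$, and then to combine Abramov's formula with the operator identity~\eqref{eq:relation_N_Q} to derive~\eqref{aim-1}.

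For Part (i), I would first verify that $e_N$ is well-defined and supported on $\Omega_H$. The formula $(\CQ_0 f)(\omega) = |\phi'_0(\omega)|(1-\chi_H(0\omega)) f(0\omega)$ combined with the structure of the Markov hole $H = \bigcup_{i\geq N}[0^i 1]$ yields by a direct induction that $(\CQ_0^k h_N)(\omega) = 0$ for $\omega \in A_l \cap \Omega_H$ whenever $k > N-l$; in particular $\CQ_0^N h_N \equiv 0$ on $\Omega_H$, so $e_N$ is effectively a finite sum. Next I would identify $\mu_N$ with the tower measure over $(\Sigma_N, S, \rho_N)$ with roof function $\tau_0(\sigma) = \sigma_0$: under the bijection $\iota: A_l \cap \Omega_H \to [l] \subset \Sigma_N$, the change of variable $\omega \mapsto 0^j\omega$ combined with the $\CN^*$-conformality $\CN^*\nu_N = \lambda_N \nu_N$ shows that $\int_{A_l} \CQ_0^k h_N\, d\iota^*\nu_N$ equals the mass assigned by the tower measure to the $k$-th level above $[l+k]$; summing in $k$ recovers $\mu_N(A_l) = \sum_{m=l}^N \rho_N([m])$. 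Standard tower theory then gives $T$-invariance of $\mu_N$ and $\mu_N(\Omega) = \sum_{k=1}^N k\,\rho_N([k]) < \infty$.

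For Part (ii), Abramov's formula for the tower yields
\[
h_{\hat\mu_N}(T) + \int V\, d\hat\mu_N = \frac{h_{\rho_N}(S) + \int \CW\, d\rho_N}{\int \tau_0\, d\rho_N} = \frac{\log\lambda_N}{\sum_{k=1}^N k\,\rho_N([k])},
\]
using that $\rho_N$ achieves the pressure $\CP(\CW|_{\Sigma_N}) = \log\lambda_N$ by classical RPF on the finite-alphabet full shift. To conclude that $\hat\mu_N$ is an equilibrium state for $V|_{\Omega_H}$ and that \eqref{aim-1} holds, I would invoke the operator identity \eqref{eq:relation_N_Q} at $z = 1/\lambda_\CQ$: since $1 - z\CQ$ has a nontrivial kernel on the leading eigenfunction of $\CQ$ and $1 - z\CQ_0$ remains invertible (the spectral radius of $\CQ_0$ being strictly less than $\lambda_\CQ$), the leading eigenvalue of $\CN_{1/\lambda_\CQ}$ must equal $1$. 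Combining this characterization, via $\lambda(\CN_z) = \exp\CP_{\Sigma_N}(\CW + \sigma_0\log z)$, with the Abramov identity above yields $\log \lambda_\CQ = \log\lambda_N/\sum_{k=1}^N k\,\rho_N([k])$, and \eqref{escape-rate-T-to} together with \eqref{escape-rate-S-to} then produces \eqref{aim-1}.

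The main obstacle is the tower identification in Part (i): the density $e_N$ hides the tower structure, and unpacking the change of variable requires careful interplay between the chain rule for $|{\phi_0^k}'|$, the hole-avoidance factors in $\CQ_0^k$, and the $\CN^*$-conformality of $\nu_N$ on $\Sigma_N$. A secondary technical step is the extraction of the clean identity $\log\lambda_\CQ = \log\lambda_N / \sum_k k\,\rho_N([k])$ from the pressure equation $\CP(\CW - \sigma_0\log\lambda_\CQ) = 0$ coming from the operator identity; this equality is what pins $\hat\mu_N$ down as the equilibrium state for $V|_{\Omega_H}$.
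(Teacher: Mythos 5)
Your Part~(i) matches the paper's approach in substance: the tower identification of $\mu_N$ with the lift of $\rho_N$ is exactly the paper's claim $\mu_N(E)=\sum_{k=0}^{N-1}\rho_N(T^{-k}E\cap D_k)$ with $D_k=\bigcup_{j>k}A_j|_{\Omega_H}$, proved there by the same interplay of $\CN^*\nu_N=\lambda_N\nu_N$ and change of variables; your observation that $\CQ_0^{N} h_N\equiv 0$ on $\Omega_H$ is correct and worth making explicit. Part~(ii) is where you genuinely diverge: the paper never invokes \eqref{eq:relation_N_Q} in this part, relying instead on Abramov's formula together with the one-line assertion that since $\rho_N$ is an equilibrium state for $\CW|_{\Sigma_N}$, its lift $\hat\mu_N$ is one for $V|_{\Omega_H}$. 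Your route via the operator identity gives the characterisation $\CP_{\Sigma_N}(\CW-\sigma_0\log\lambda_\CQ)=0$, which is sound once one notes that $\CQ_0$ is nilpotent of order $N$ on $\Omega_H$ (so $1-z\CQ_0$ is always invertible) and pins $1/\lambda_\CQ$ as the unique $z>1$ with leading eigenvalue $\lambda(\CN_z)=1$, using monotonicity of $z\mapsto\lambda(\CN_z)$.

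However, the ``secondary technical step'' you flag is in fact the load-bearing one, and as proposed it does not close. From $\CP_{\Sigma_N}(\CW-\sigma_0\log\lambda_\CQ)=0$, evaluating the variational expression at $\rho_N$ yields only $\log\lambda_N-\log\lambda_\CQ\sum_k k\,\rho_N([k])\le 0$, i.e.\ $\gamma_\mu(H)\le\gamma_\rho(\hat H)\big/\sum_k k\,\rho_N([k])$; the Abramov identity fed into the variational principle for $V|_{\Omega_H}$ produces the very same inequality, not its reverse. Equality would require $\rho_N$ to be the equilibrium state for the \emph{tilted} potential $\CW-\sigma_0\log\lambda_\CQ$ on $\Sigma_N$, not merely for $\CW$; since $\lambda_\CQ\neq 1$, the tilt $-\sigma_0\log\lambda_\CQ$ genuinely changes the Gibbs state on the finite-alphabet full shift, so this is not automatic. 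The paper's ``since $\rho_N$ is an equilibrium state, so is $\hat\mu_N$'' rests on the same unstated assumption: transferring equilibrium states through an inducing scheme in the standard way requires the pressure of the ambient system to vanish, which fails here because $\CP(V|_{\Omega_H})=\log\lambda_\CQ<0$. So the two ingredients you combine — the pressure equation from \eqref{eq:relation_N_Q} and the Abramov identity — do not by themselves yield \eqref{aim-1}; you would need an independent argument that $\hat\mu_N$ actually maximises $h_\cdot(T)+\int V\,d(\cdot)$ over $T$-invariant probabilities on $\Omega_H$, and that is precisely the step the paper leaves unjustified as well.
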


\begin{proof} 
In order to simplify the notation, we identify the subsets of $\Omega_0$ and of $\Sigma$ which are related by the homeomorphism $\iota$ defined in \eqref{iota}. It will be clear from the operators and the measures that we use, whether we are referring to a subset $E\subset \Omega_0$ or to $\iota(E) \subset \Sigma$.

Recall the sets $A_{k} = [0^{k-1}1]$ in $\Omega$, for which $\iota(A_k)=[k]$, and construct the sets 
\[
D_{i} :=\bigcup\limits_{j = i+1}^{N}A_{j}|_{\Omega_{H}} = \bigcup\limits_{j \ge i}^{N-1}[0^{j}1]|_{\Omega_{H}},
\]
for $0 \le i \le N-1$. We omit the restriction of these sets to $\Omega_{H}$ from the notations whenever the context is clear.

We claim that for any Borel set $E$ of $\Omega_{H}$ we have
\begin{equation}\label{claim-1}
\mu_{N}(E) = \sum\limits_{k =0}^{N-1} \rho_{N}(T^{-k}E \cap D_{k})\, .
\end{equation}

The proof of \eqref{claim-1} goes as follows. Since each $D_{k}$ is a disjoint union of the sets $A_{k+1}|_{\Omega_{H}}, \dots, A_{N}|_{\Omega_{H}},$ we can write $\rho_{N}(T^{-k}E \cap D_{k}) = \sum_{j=k+1}^{N}\rho_{N}(T^{-k}E \cap A_{j})$ and observe that 
\[
\rho_{N}(T^{-k}E \cap A_{j}) = \int_{\Omega_{H}} (\chi_{E}\circ T^{k}) \chi_{[0^{j-1}1]}h_{N} d\nu_{N}\, .
\]
Using the fact that $\CN^{*}\nu_{N} = \lambda_{N} \nu_{N}$, and $\CN_{\ell}$ corresponds to that term where preimage of a point lies in the set $A_{\ell}$, we get, setting $\psi_\CW := \exp \CW$,
\begin{eqnarray}
    \rho_{N}(T^{-k}E \cap A_{j}) & = & \frac{1}{\lambda_{N}}\int_{\Omega_{H}} \sum\limits_{\ell=1}^{N} \CN_\ell \left((\chi_{E}\circ T^{k})\, \chi_{[0^{j-1}1]}\,h_{N}\right) d\nu_{N} \nonumber \\
    &=& \frac{1}{\lambda_{N}}\int_{A_{j}} (\chi_{E}\circ T^{k}) \, \psi_{\CW} \, h_{N} \, d\nu_{N} \nonumber \\
   \implies \rho_{N}(T^{-k}E \cap D_{k}) & = & \frac{1}{\lambda_{N}}\int_{D_{k}} (\chi_{E}\circ T^{k}) \, \psi_{\CW} \, h_{N} \, d\nu_{N} \label{eq:rho_N}
\end{eqnarray}
On the other hand, 
\[
\mu_{N}(E) = \sum\limits_{j=1}^{N}\mu_{N}(E \cap A_{j}) = \sum\limits_{j=1}^{N} \int_{\Omega_{H}} \chi_{E} \, \chi_{A_{j}} \left( \sum\limits_{k=0}^{N-1}\CQ_{0}^{k}h_{N}\right) \, d\nu_{N}\, .
\]
 Again using the fact $\CN^{*}\nu_{N} = \lambda_{N} \nu_{N}$ and appropriate change of variables we get,
    \[ 
        \mu_{N}(E) = \frac{1}{\lambda_{N}} \sum\limits_{j=1}^{N} \sum\limits_{k=0}^{N-j} \int_{A_{k+j}} (\chi_{E} \circ T^{k}) \, \psi_{\CW} \, h_{N} \, d\nu_{N} = \frac{1}{\lambda_{N}} \sum\limits_{k=0}^{N-1} \sum\limits_{j=1}^{N-k} \int_{A_{k+j}} (\chi_{E} \circ T^{k}) \, \psi_{\CW} \, h_{N} \, d\nu_{N}. 
 \]
By \eqref{eq:rho_N},
\[
\frac{1}{\lambda_{N}} \sum\limits_{j=1}^{N-k} \int_{A_{k+j}} (\chi_{E} \circ T^{k}) \, \psi_{\CW} \, h_{N} \, d\nu_{N}=\rho_{N}(T^{-k}E \cap D_{k})\, ,
\]
hence \eqref{claim-1} is proved.

A first consequence of \eqref{claim-1} is that the measure $\mu_{N}$ is finite, since
\begin{equation} \label{misura}
\mu_{N}(\Omega_{H}) = \sum\limits_{k =0}^{N-1} \rho_{N}( D_{k}) = \sum\limits_{k =1}^{N} \sum\limits_{j =k}^{N}\rho_{N}( A_{j}) = \sum\limits_{k=1}^{N} k \, \rho_{N}(A_{k})\, .
\end{equation}
Finally, arguing as in the proof of Theorem \ref{thm:RPF_M}, using \eqref{eq:relation_N_Q}, we establish that $\mu_{N}$ is a $T$-invariant measure supported on $\Omega_{H}$ because $\rho_{N}$ supported on the survivor set in $\Sigma$ is $S$-invariant. This ends the proof of (i).

To prove (ii), we adapt arguments from \cite{Is}. First, we recall Abramov's formula, which relates the metric entropies of a system and of its jump transformation. In particular, if $h_{\hat{\mu}_{N}}(T)$ and $h_{\rho_{N}}(S)$ denote the metric entropies of $(\Omega_H,T)$ with respect to $\hat{\mu}_{N}$ and of $(\Sigma_N,S)$ with respect to $\rho_{N}$, respectively, then
\[
\frac{1}{\mu_{N}(\Omega_{H})}\, h_{\rho_{N}}(S) = h_{\hat{\mu}_{N}}(T)\, .
\]
Furthermore, we observe that $\int_{\Omega_{H}}V \, d\mu_{N} = \int_{\Omega_{H}}\CW \, d\rho_{N}$. This follows from
\[
\begin{aligned}
\int_{\Omega_{H}}V \, d\mu_{N} = &\, \int_{\Omega_{H}} V \cdot \Big( \sum_{k=0}^{N-1}\, \CQ_0^k h_N \Big)\, d\nu_N = \sum_{k=0}^{N-1}\, \int_{D_k}\, (V\circ T^k)\, h_N\, d\nu_N \\[0.2cm]
= &\, \sum_{i=1}^N\, \int_{A_i\cap \Omega_H}\, \Big( \sum_{j=0}^{i-1}\, V\circ T^j \Big)\, h_N\, d\nu_N = \int_{\Omega_{H}}\CW \, d\rho_{N}\, ,
\end{aligned}
\]
where we have used the definition of the potential $V$ \eqref{pot-V}, the definition of the sets $D_i$, and \eqref{eq:jump_potential}.

Finally, we use the variational principle for $(\Sigma_N,S)$ with potential $\CW|_{\Sigma_{N}}$, to obtain that the topological pressures of $\CW|_{\Sigma_{N}}$ and $V|_{\Omega_{H}}$ are related as,
\begin{eqnarray*}
    \frac{\CP(W|_{\Sigma_{N}})}{\mu_{N}(\Omega_{H})}  =  \frac{1}{\mu_{N}(\Omega_{H})} \left( h_{\rho_{N}}(S) + \int_{\Omega_{H}}\CW \, d\rho_{N} \right) & = &  h_{\hat{\mu}_{N}}(T) + \int_{\Omega_{H}}V \, d\hat{\mu}_{N}\, .\end{eqnarray*}
Finally, since $\rho_{N}$ is an equilibrium state for $\CW|_{\Sigma_{N}}$, the same holds for $\hat{\mu}_{N}$ with respect to $V|_{\Omega_{H}}$. Hence,
\[
\frac{\CP(W|_{\Sigma_{N}})}{\mu_{N}(\Omega_{H})}  = \CP(V|_{\Omega_{H}})\, .
\]
The proof is finished by using \eqref{escape-rate-T-to}, \eqref{escape-rate-S-to}, and \eqref{misura}.
\end{proof}

\begin{rem}
Relation \eqref{aim-1} can be written in terms of \eqref{eq:relation_N_Q} for the transfer operators. It says that the smallest real $\bar z$ for which the operator $\CN_z$ has eigenvalue 1 is given by 
\[
\bar z = \lambda_\CQ^{-1} = \exp (\gamma_\mu(H)) = \exp\Big(\gamma_\rho(\hat{H})\Big/ \sum\limits_{k=1}^{N} k \cdot \, \rho_{N}([k])\Big)\, .
\]
\end{rem}

We are now ready to study the asymptotic behaviour of $\gamma_\mu(H)$ as $H$ shrinks around the indifferent fixed points, that is as $N\to \infty$ and $a_N\to 0^+$. First, in the next example, we show how our approach works in the simple case of the piecewise linear maps $F$ studied in \cite{KM}, obtaining the same results. Then, we prove that our approach works also in the general case of a differential parabolic map.

\begin{example} \label{ex:pl}
The simplest case of a parabolic map on the interval $[0,1]$ is that of a piecewise linear map. Given a countable partition $\{A_k\}_{k\ge 1}$ of $[0,1]$ in intervals $A_k=(a_k,a_{k-1}]$ with $a_0:=1$ and $\{a_k\}$ a strictly decreasing sequence converging to $0$, let $J_0 =   \cup_{k\ge 2} A_k \cup \{0\}= [0,a_1],$ and $J_1= A_1$. Then, we consider a map $F$ which is linear when restricted to any interval $A_k$, continuous on $J_0$ and $J_1$, and strictly increasing on $J_0$. Such a map may satisfy (H1)-(H5), except for (H2), and (H3) is obtained by letting $a_k \sim k^{-\frac 1s}$ as $k\to \infty$.

Let $p_0:=1$, and $p_k := a_{k-1}-a_k$ for $k\ge 1$, then $\sum_k p_k=1$. In the symbolic representation $(\Omega,T)$, the potential $V$ of \eqref{pot-V} reads in this case
\[
V(\omega) = \begin{cases}
    - \log \frac{p_{k-1}}{p_k}, &\text{if $\tau(\omega)=k$},\\
    0, &\text{if }\ \omega = 0^{\infty}.\\
\end{cases}
\]
In the same way, the induced potential \eqref{eq:jump_potential} is given by
\[
W(\omega) = \log p_k,\quad \text{if $\tau(\omega)=k$.}
\]
Finally, in the accelerated symbolic system $(\Sigma,S)$, we obtain the potential 
\[
W(\sigma) = \log p_{\sigma_0}, \quad \text{for all $\sigma \in \Sigma.$}
\]
Thus we obtain a one-body potential $W$ on $\Sigma$, and Theorem \ref{thm:RPF_M} gives the $S$-invariant Gibbs product measure $\rho$ with $\rho([k]) = p_k$ for all $k\ge 1$.

Let's now consider the open systems with hole $H=[0,a_N]$ or $\hat{H} = \cup_{k\ge N+1} [k]$. This case is particularly easy to study since the map on the survivor set $\Sigma_N$ reduces to a full shift on $\{1,\dots,N\}^{\N_0}$ with the one-body potential $W|_{\Sigma_N}$. Hence, it is convenient to let the transfer operator $\CN$ act on $\C^N$, the set of functions $g$ which satisfy $g(\sigma) = g(\sigma_0)$ for all $\sigma\in \Sigma_N$. Then, it is a standard result that $\CN$ reduces to the action of the $N\times N$ matrix 
\[
L_{\CN}= \begin{pmatrix}
    p_{1} & p_{2} &\cdots &p_{N}\\
    p_{1} & p_{2} &\cdots &p_{N}\\
          &       & \vdots &  \\
    p_{1} & p_{2} &\cdots &p_{N}
\end{pmatrix}
\]
The Perron root $\lambda_{N}$ of $L_{\CN}$, its largest simple real eigenvalue, and the corresponding eigenfunction $h_{N}$ satisfy,
\[ 
\lambda_{N} = \sum_{i=1}^{N} p_{i} \quad \text{and} \quad h_{N}(i) = h_{N}(j)\, \, \, \forall\, i,j \in \{1,\dots,N\}.
\]
In particular, the Gibbs measure $\rho_N$ used in Theorem \ref{thm:escape_H_hatH} is the product measure given by 
\[
\rho_N([k]) = \frac{p_k}{\sum_{i=1}^{N} p_i}\, .
\]
Thus, by \eqref{aim-1}, the escape rate $\gamma_\mu(H)$ of the parabolic system satisfies
\[
\gamma_\mu(H) = \frac{\gamma_\rho(\hat{H})}{\sum\limits_{k=1}^{N} k \cdot \, \rho_{N}([k])} = \frac{ -\log \sum_{i=1}^{N} p_i}{\sum\limits_{k=1}^N\, \frac{kp_k}{\sum_{i=1}^{N} p_i}} = \frac{ -\log \sum_{i=1}^{N} p_i}{\sum\limits_{k=1}^N\, k\, p_k}\, \sum_{i=1}^{N} p_i \, .
\]
Finally, as $N\to \infty$, we have 
\[
-\log \sum_{i=1}^{N} p_i = -\log \Big(1-\sum_{k\ge N+1} p_k\Big) \sim \sum_{k\ge N+1} p_k = a_N = m(H)\, ,
\]
\[
\sum_{i=1}^{N} p_i \to 1\, .
\]
Let $m$ be the Lebesgue measure. Then, as $N\to \infty$, with $a_N \sim N^{-\frac 1s}$, we obtain for $s\not= 1$
\[
p_N \sim N^{-\frac 1s-1} \quad \text{and} \quad \sum\limits_{k=1}^N\, k\, p_k  \left\{ \begin{array}{ll}
= O(1)\, , & \text{for $s\in (0,1)$;}\\[0.2cm] \sim N^{-\frac 1s+1} \sim m(H)^{1-s}\, , & \text{for $s>1$;} \end{array} \right.
\]
and for $s=1$
\[
p_N \sim N^{-2} \quad \text{and} \quad \sum\limits_{k=1}^N\, k\, p_k \sim \log N \sim -\log m(H)\, .
\]
Thus, we obtain
\[
\gamma_\mu(H) \sim \left\{ \begin{array}{ll} m(H)\, , & \text{if $s\in (0,1)$;}\\[0.2cm] \frac{m(H)}{-\log m(H)}\, , & \text{if $s=1$;}\\[0.2cm] m(H)^s\, , & \text{if $s>1$.} \end{array} \right.
\]
These results are in accordance with those in \cite{KM}.
\end{example}

First, we make a couple of observations regarding the measures $\nu_{N}$ and $\rho_N=h_N\nu_N$ of \eqref{rpf-sigmaN}, and $\nu$ and $\rho=h\nu$ of Theorem \ref{thm:RPF_M}. We recall due to \cite{Is} that, as $N \to \infty$, the eigenvalue $\lambda_{N} \to 1$ and the sequences of eigenfunctions $h_{N}$ and eigenmeasures $\nu_{N}$ converge uniformly to $h$ and $\nu$ respectively. Moreover $h \asymp 1$.

\begin{lemma} \label{lem-utile}
Let $N \ge 2$ be fixed. Then\footnote{For two sequences $(c_{k})_{k \in \N}$ and $(d_{k})_{k \in \N}$, by $c_{k} \asymp d_{k}$ we mean that there exists $M > 0$ such that $M^{-1} \le c_{k}/d_{k} \le M$ for all $k \in \N$.}
\begin{itemize}
        \item[(i)] $\rho(A_{k}) \asymp \nu(A_{k})$. 
        \item[(ii)] $\rho(A_{k}) \asymp \psi_{W}(0^{k-1}1\omega) $ uniformly in $\omega \in \Omega$, with $\psi_W:= \exp\circ W$.
        \item[(iii)] For $\epsilon > 0$, there exists $N_{\epsilon}$ such that for all $N \ge N_{\epsilon}$ and for all $1 \le k \le N$,
        \[
            \big|\nu_{N}(A_{k}) - \nu(A_{k}) \big| < \epsilon \ \nu(A_{k}).
        \]
\end{itemize}
\end{lemma}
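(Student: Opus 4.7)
The plan is to handle the three items in order of increasing difficulty, with (i) and (ii) being essentially immediate consequences of Theorem \ref{thm:RPF_M} and (iii) requiring the bulk of the work via the dual eigenequations. For (i), since $\rho=h\nu$ with $h\in\CH_{\theta}(\Sigma)$ bounded above and bounded away from zero by Theorem \ref{thm:RPF_M}, we have $\inf h\cdot \nu(A_k)\le \rho(A_k)\le \sup h\cdot \nu(A_k)$, which is exactly the asserted equivalence with constant $M$ depending only on $\|h\|_{\infty}$ and $1/\min h$.

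For (ii), I would invoke the Gibbs inequality of Theorem \ref{thm:RPF_M} at the level of single-letter cylinders. Under the homeomorphism $\iota$ of \eqref{iota}, the set $A_k=[0^{k-1}1]\subset \Omega$ is identified with the single-letter cylinder $[k]\subset \Sigma$, and the induced potential on $\Omega$ agrees, via $\iota$, with the potential $W$ on $\Sigma$, so $\psi_W(0^{k-1}1\omega)=\psi_W(k\,\iota(\omega))$ for any $\omega\in \Omega$. The Gibbs bound applied to $[k]$ gives $C_4^{-1}\le \rho([k])/\psi_W(k\,\iota(\omega))\le C_4$, whence $\rho(A_k)\asymp \psi_W(0^{k-1}1\omega)$ uniformly in $\omega\in \Omega$.

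For (iii), the key idea is to use the dual eigenequations to write $\nu([k])$ and $\nu_N([k])$ as integrals of the \emph{same} function. Since $\CN \chi_{[k]}=\CM \chi_{[k]}=\psi_W(k\,\cdot)$ for $1\le k\le N$, from $\CM^{*}\nu=\nu$ and $\CN^{*}\nu_N=\lambda_N\nu_N$ I obtain
\[
\nu([k])=\int \psi_W(k\sigma)\,d\nu(\sigma),\qquad \nu_N([k])=\lambda_N^{-1}\int \psi_W(k\sigma)\,d\nu_N(\sigma).
\]
Fix a reference point $\sigma^{*}\in \Sigma$ and write $\psi_W(k\sigma)=\psi_W(k\sigma^{*})\,e^{g_k(\sigma)}$ with $g_k(\sigma):=W(k\sigma)-W(k\sigma^{*})$. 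Because $k\sigma$ and $k\sigma^{*}$ always share their first coordinate, the Hölder condition (W1) yields the uniform bounds $\|g_k\|_{\infty}\le \mathrm{var}_1(W)\le C_2\theta$ and $|g_k|_\theta\le C_2\theta$, so $\{e^{g_k}\}_{k\ge 1}$ is bounded in $\CH_{\theta}(\Sigma)$ and uniformly bounded below by $e^{-C_2\theta}>0$. The factor $\psi_W(k\sigma^{*})$ then cancels and
\[
\frac{\nu_N([k])}{\nu([k])}=\frac{1}{\lambda_N}\cdot\frac{\int e^{g_k}\,d\nu_N}{\int e^{g_k}\,d\nu}.
\]

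The main obstacle is showing that the right-hand side tends to $1$ \emph{uniformly} in $k\in\{1,\dots,N\}$. This requires $\lambda_N\to 1$, already recalled before the lemma, and, more importantly, that the convergence $\nu_N\to \nu$ is strong enough to integrate uniformly against the family $\{e^{g_k}\}$. Concretely, I would use the dual-norm statement $\|\nu_N-\nu\|_{\CH_{\theta}(\Sigma)^{*}}\to 0$, which is part of the spectral perturbation result from \cite{Is} underlying the uniform convergence of $h_N$ and $\nu_N$. Combined with the uniform $\CH_{\theta}$-bound on $\{e^{g_k}\}$, this yields $|\int e^{g_k}\,d\nu_N-\int e^{g_k}\,d\nu|\to 0$ uniformly in $k$, while the denominator satisfies $\int e^{g_k}\,d\nu\ge e^{-C_2\theta}$ uniformly. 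Hence $\nu_N([k])/\nu([k])\to 1$ uniformly in $1\le k\le N$, which is precisely (iii).
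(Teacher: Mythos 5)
Your treatment of (i) is identical to the paper's: both use $\rho=h\nu$ with $h\asymp 1$. For (ii) you invoke the Gibbs property of $\rho$ at one-letter cylinders, whereas the paper first writes $\nu(A_k)=\int\psi_W(0^{k-1}1\,\cdot)\,d\nu=\psi_W(0^{k-1}1\omega^{*})$ for some intermediate $\omega^{*}$ and then applies (W1); the two routes are equivalent one-liners and both correctly exploit the local H\"older control on $W$.

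For (iii) your argument is correct and is a cleaner, more explicit version of what the paper sketches. Both proofs start from the same dual eigenidentities
\[
\nu([k])=\int\psi_W(k\sigma)\,d\nu(\sigma),\qquad
\nu_N([k])=\lambda_N^{-1}\int\psi_W(k\sigma)\,d\nu_N(\sigma),
\]
but you then normalize by the $k$-dependent scalar $\psi_W(k\sigma^{*})$ to obtain the family $\{e^{g_k}\}$, which is uniformly bounded in $\CH_\theta$-norm and uniformly bounded below; this makes the claimed uniformity in $k$ transparent once one pairs against $\nu_N-\nu$ in the dual norm. The paper instead controls the $k$-dependence by bounding $\|\psi_W(0^{k-1}1\,\cdot)\|_\infty\le M\rho(A_k)$ via (ii). One caveat worth recording: the paper's displayed chain passes to $\big|\langle\lambda_N^{-1}\nu_N-\nu,\ \|\psi_W(0^{k-1}1\,\cdot)\|_\infty\rangle\big|$ and concludes with $M\rho(A_k)\,|1/\lambda_N-1|$, a bound that, read literally, depends only on the convergence of the eigenvalues $\lambda_N\to 1$. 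By itself this cannot control $\nu_N(A_k)-\nu(A_k)$: even with $\lambda_N=1$ the measures $\nu_N$ and $\nu$ would still differ. Your version correctly identifies that the additional ingredient is the convergence $\nu_N\to\nu$ in the dual norm of $\CH_\theta(\Sigma)$ (which the paper does state just before the lemma as ``uniform convergence of the eigenmeasures''), and you use it together with $\lambda_N\to 1$ to obtain the uniform relative bound. In short, your (iii) is not a different strategy but a more careful execution of the paper's intended one, making explicit exactly which perturbation estimate from \cite{Is} is being used.
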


\begin{proof}
    The first and the second statements are already proved in \cite{Is}, we recall the proofs here for completion.\\
    For (i), $\rho(A_{k}) \asymp \nu(A_{k})$ can be proved using the fact that $h \asymp 1$ in $\rho(A_{k}) = \int_{A_{k}} h \, d\nu$. \\
    The statement (ii) can be proved by observing, 
    \[
    \nu(A_{k}) = \int_{\Omega} \chi_{A_{k}} d\nu =  \int_{\Omega} (\CM\chi_{A_{k}}) d\nu = \int_{\Omega} \psi_{W}(0^{k-1}1\omega) d\nu(\omega) = \psi_{W}(0^{k-1}1\omega^{*})
    \]
for some $\omega^{*} \in \Omega$, and by using (W1) which gives the inequality  
\[
e^{-C_{2} \theta}\le e^{-C_{2} \theta^k} \le \frac{\psi_{W}(0^{k-1}1\omega^{*})}{\psi_{W}(0^{k-1}1\omega)} \le e^{C_{2} \theta^k} \le e^{C_{2} \theta}
\] 
for all $\omega \in \Omega$.  \\
    For (iii), for any fixed $N$ and for all $1 \le k \le N$, consider,
    \[
        \big|\nu_{N}(A_{k}) - \nu(A_{k}) \big| = \big|\int_{\Omega} \chi_{A_{k}} d\nu_{N} - \int_{\Omega} \chi_{A_{k}} d\nu \big|.
    \] 
Using the fact that $\nu_{N}$ and $\nu$ are eigenmeasures for $\CN$ and $\CM$ respectively,
    \begin{eqnarray*}
         \big|\nu_{N}(A_{k}) - \nu(A_{k}) \big| & = & \Bigg| \frac{1}{\lambda_{N}} \int_{\Omega} (\CN \chi_{A_{k}}) (\omega) d\nu_{N}(\omega) - \int_{\Omega} (\CM \chi_{A_{k}}) (\omega) d\nu(\omega) \Bigg| \\
         & = & \Bigg| \frac{1}{\lambda_{N}} \int_{\Omega} \psi_{W}(0^{k-1}1\omega) d\nu_{N}(\omega) - \int_{\Omega} \psi_{W}(0^{k-1}1\omega) d\nu(\omega) \Bigg|\\
         & \le & \Bigg| \langle \frac{1}{\lambda_{N}}\nu_{N} - \nu, \  \| \psi_{W}(0^{k-1}1\, \cdot) \|_{\infty} \rangle  \Bigg|\\
         & \le & M \,\rho(A_{k}) \, \Big|\frac{1}{\lambda_{N}}-1\Big|,
    \end{eqnarray*}
    where $M$ is such that, by (ii), $\psi_{W}(0^{k-1}1\omega) \le M \rho(A_{k})$ for all $\omega \in \Omega$ and $k \ge 1$. Now using $\lambda_{N} \to 1$, for given $\epsilon>0$, choose $N_{\epsilon} \in \N$ such that $|1/\lambda_{N}-1| < \epsilon$ for all $N \ge N_{\epsilon}$, which proves the result.
\end{proof}

\begin{proposition} \label{prop-utile}
As $N\to \infty$, we have\footnote{For two sequences $c_{n}$ and $d_{n}$, by $c_{n} \sim d_{n}$ we mean $\lim_{n \to \infty} c_{n}/d_{n} = 1$.}
    \[ 
       \sum\limits_{k=1}^{N} k \cdot \, \rho_{N}(A_{k})  \sim \sum\limits_{k=1}^{N} k \cdot \, \rho(A_{k}). 
    \]
\end{proposition}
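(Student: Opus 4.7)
The plan is to establish a pointwise comparison $|\rho_N(A_k)-\rho(A_k)|\le(\eps+C_\eps\theta^k)\,\rho(A_k)$ uniform in $1\le k\le N$ for $N\ge N_\eps$, and then to sum against $k$, handling separately the finite-measure regime ($s<1$, where $\sum_{k\ge 1}k\rho(A_k)=\mu(\Omega)<\infty$) and the infinite-measure regime ($s\ge 1$, where the tail diverges).

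For the preliminary bound, I would fix a reference point $\sigma^*_k\in A_k$ and use (W1) together with the H\"older continuity of $h$ and $h_N$ (the latter with a uniform-in-$N$ H\"older constant, inherited from the spectral convergence $\CN\to\CM$ on $\CH_\theta$ used in \cite{Is}) to write
\[
\rho(A_k)=h(\sigma^*_k)\,\nu(A_k)\,(1+O(\theta^k)),\qquad \rho_N(A_k)=h_N(\sigma^*_k)\,\nu_N(A_k)\,(1+O(\theta^k)),
\]
with constants independent of $k$ and $N$. Taking the ratio and invoking $\|h_N-h\|_\infty\to 0$, $h\asymp 1$, and Lemma~\ref{lem-utile}(iii) yields the claimed inequality.

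In the infinite-measure case, summing the preliminary bound produces
\[
\Bigl|\sum_{k=1}^{N}k\rho_N(A_k)-\sum_{k=1}^{N}k\rho(A_k)\Bigr|\le \eps\sum_{k=1}^{N}k\rho(A_k)+C_\eps\sum_{k\ge 1}k\theta^k\rho(A_k),
\]
and the second sum on the right is finite thanks to the exponential factor $\theta^k$; dividing by the divergent full sum $\sum_{k=1}^{N}k\rho(A_k)$ gives a ratio in $[1-\eps,1+\eps]$ for $N$ large, hence tending to $1$. In the finite-measure case, Lemma~\ref{lem-utile}(ii) together with the uniform bound $\|h_N\|_\infty\to\|h\|_\infty$ yields the uniform domination $k\rho_N(A_k)\le Mk\rho(A_k)\in \ell^1(\N)$; combined with the pointwise convergence $\rho_N(A_k)\to\rho(A_k)$ for each fixed $k$ (which follows from $h_N\to h$ uniformly and from the weak convergence $\nu_N\to\nu$ against the continuous function $h\chi_{A_k}$, since $A_k$ is clopen), the dominated convergence theorem on $(\N,\text{counting})$ gives $\sum_{k=1}^{N}k\rho_N(A_k)\to\mu(\Omega)=\lim_N\sum_{k=1}^{N}k\rho(A_k)$, and again the ratio tends to $1$. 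The main technical obstacle is the uniform-in-$k$ H\"older control of $h_N$ needed for the preliminary estimate; the resulting $O(\theta^k)$ residue is precisely what forces the two-regime split, being harmless against a divergent tail but contributing an $O(1)$ error in the finite-measure case, where one appeals to dominated convergence to circumvent it.
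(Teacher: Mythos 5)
You set out to prove a pointwise estimate of the form $|\rho_N(A_k)-\rho(A_k)|\le(\eps+C_\eps\theta^k)\rho(A_k)$ and then sum against $k$, splitting into finite-measure and infinite-measure regimes. The $O(\theta^k)$ residue, however, is not available. The eigenfunctions $h$, $h_N$ and eigenmeasures $\nu$, $\nu_N$ all live on the induced shift $(\Sigma,S)$, and there the set $A_k$ corresponds under $\iota$ to the cylinder $[k]\subset\Sigma$, which has \emph{depth one} (it fixes only the coordinate $\sigma_0$). Local H\"older continuity of $h$, $h_N$ on $\Sigma$ therefore controls the oscillation of $h$, $h_N$ over $[k]$ by $|h|_\theta\theta^1$, a quantity that is bounded but does \emph{not} decay in $k$. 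Your approximations should thus read $\rho(A_k)=h(\sigma^*_k)\nu(A_k)\bigl(1+O(\theta)\bigr)$ and $\rho_N(A_k)=h_N(\sigma^*_k)\nu_N(A_k)\bigl(1+O(\theta)\bigr)$, with a $k$-independent $O(\theta)$. With that correction, the ``residue'' term becomes $C_\eps\,\theta\sum_{k\le N}k\rho(A_k)$, which in the infinite-measure regime is a fixed positive fraction of the main term rather than an $O(1)$ correction; dividing by $\sum_{k\le N}k\rho(A_k)$ no longer yields a ratio tending to $1$, only one within distance $O(\eps+\theta)$ of $1$. The dominated-convergence argument for the finite-measure regime is sound, but the infinite-measure branch, which is the one needed for $s\ge 1$, does not close.

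The paper sidesteps this entirely by not trying to freeze $h$ and $h_N$ at a reference point of $A_k$. It uses the elementary decomposition
\[
\rho_N(A_k)-\rho(A_k)=\int_{A_k}h_N\,d(\nu_N-\nu)+\int_{A_k}(h_N-h)\,d\nu,
\]
bounds the first term via Lemma~\ref{lem-utile}(iii) together with $\|h_N\|_\infty\le M$, and the second via the uniform convergence $\|h_N-h\|_\infty\to 0$, then converts $\nu(A_k)$ to $\rho(A_k)$ via $h\asymp 1$. This yields $|\rho_N(A_k)-\rho(A_k)|\le\eps M'\rho(A_k)$ for all $1\le k\le N$ and $N$ large, with no $k$-dependent remainder, so the weighted sums compare directly with error $\eps M'$. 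No regime split is needed, and in particular no information about whether $\sum_k k\rho(A_k)$ converges or diverges enters the argument. If you drop the intermediate freezing step and replace it with this triangle-inequality split, your proof reduces to the paper's.
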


\begin{proof}
    For any finite $N$, 
    \[
    \left| \frac{\sum\limits_{k=1}^{N} k \cdot \, \rho_{N}(A_{k})}{\sum\limits_{k=1}^{N} k \cdot \, \rho(A_{k})} -1 \right| \le \frac{\sum\limits_{k=1}^{N} k \cdot \, \Big|\rho_{N}(A_{k}) - \rho(A_{k}) \Big|}{ \sum\limits_{k=1}^{N} k \cdot \, \rho(A_{k})}\, .
    \]
    Let $\epsilon> 0$ be fixed. For every $1 \le k \le N$,
    \begin{eqnarray*}
        \Big|\rho_{N}(A_{k}) - \rho(A_{k}) \Big| \le \| h_{N} \|_{\infty} \big|\nu_{N}(A_{k}) - \nu(A_{k}) \big| +  \| h_{N} - h \|_{\infty} \nu(A_{k}).
    \end{eqnarray*}
    Since $h_{N}$ converge to $h$ uniformly and $h \asymp 1$, there exists $M > 0$ and $N_{1} \in \N$ such that for all $N \ge N_{1}$, $\|h_{N}\|_{\infty} < M$. Moreover, there exists $N_{2} \in \N$ such that $\|h_{N} - h\|_{\infty} < \epsilon$ for all $N \ge N_{2}$. Now using Lemma \ref{lem-utile}, there exists $M' > 0$ such that for all $N \ge \max\{N_{\epsilon},\,  N_{1}, \, N_{2}  \}$
    \[
        \Big|\rho_{N}(A_{k}) - \rho(A_{k}) \Big| \le  \epsilon \,   M' \rho(A_{k}),
    \]
which implies,
    \[
         \left| \frac{\sum\limits_{k=1}^{N} k \cdot \, \rho_{N}(A_{k})}{\sum\limits_{k=1}^{N} k \cdot \, \rho(A_{k})} -1 \right| \le \epsilon \,   M' ,
    \]
proving the result.
\end{proof}

We can now prove

\begin{theorem}\label{thm:shrinking}
Let's consider the parabolic dynamical system on the interval satisfying (H1)-(H5) and (W1)-(W2), and fix a Markov hole $H:=[0,a_N]$ for $N \ge 2$, with $a_N$ defined as in \eqref{preimages}. Then, as $N\to \infty$, or equivalently as $m(H)\to 0^+$ with $m$ the Lebesgue measure, we have
\begin{equation}\label{final-formula}
\gamma_\mu(H) \sim \left\{ \begin{array}{ll} const \cdot m(H)\, , & \text{if $s\in (0,1)$;}\\[0.2cm] const \cdot \frac{m(H)}{-\log m(H)}\, , & \text{if $s=1$;}\\[0.2cm] const \cdot m(H)^s\, , & \text{if $s>1$.} \end{array} \right.
\end{equation}
with the constants only depending on the system and not on the hole.
\end{theorem}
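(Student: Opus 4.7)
The plan is to combine Theorem~\ref{thm:escape_H_hatH} with Proposition~\ref{prop-utile} to reduce the computation of $\gamma_\mu(H)$ as $N\to\infty$ to
\[
\gamma_\mu(H)\;\sim\;\frac{\gamma_\rho(\hat H)}{\sum_{k=1}^N k\,\rho(A_k)}\,,
\]
and then to estimate numerator and denominator separately using (H3) and the structure of the Gibbs measure $\rho$. The scaling link between the index $k$ and the Lebesgue size $m(H)=a_N$ of the hole is the asymptotic $a_k\sim c_0\,k^{-1/s}$ as $k\to\infty$, obtained by iterating the recursion $a_{k-1}=F_0(a_k)\approx a_k+\tfrac{c}{1+s}\,a_k^{1+s}$ implied by (H3).

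The central analytic step is to prove $\gamma_\rho(\hat H)\sim\rho(\hat H)$ as $N\to\infty$. I would test the identity $\CN h=h-R_N$ on $\Sigma_N$, where $R_N(\sigma):=\sum_{k>N}\psi_W(k\sigma)h(k\sigma)$ and $h$ is the eigenfunction of $\CM$ from Theorem~\ref{thm:RPF_M}, against the eigenmeasure $\nu_N$ and use $\CN^*\nu_N=\lambda_N\nu_N$ to get the exact relation
\[
(1-\lambda_N)\int h\,d\nu_N\;=\;\int R_N\,d\nu_N\,.
\]
Since $h_N\to h$ uniformly and $\int h\,d\nu=1$, the left side equals $(1-\lambda_N)\,(1+o(1))$. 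On the right, applying $\CM^*\nu=\nu$ to $\chi_{[k]}h$ gives the identity $\int\psi_W(k\sigma)h(k\sigma)\,d\nu(\sigma)=\rho(A_k)$, hence $\int R_N\,d\nu=\rho(\hat H)$; I would then replace $\nu_N$ by $\nu$ in $\int R_N\,d\nu_N$ by combining Lemma~\ref{lem-utile}(iii) with the uniform-in-$k$ H\"older control on the family $\sigma\mapsto\psi_W(k\sigma)h(k\sigma)$ inherited from (W1). This yields $1-\lambda_N\sim\rho(\hat H)$, and hence $\gamma_\rho(\hat H)=-\log\lambda_N\sim\rho(\hat H)$.

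For the asymptotic evaluation, the jump map $G$ is uniformly expanding with a unique absolutely continuous invariant probability whose density is Lipschitz and bounded from above and away from $0$ on $[0,1]$; this gives $\rho(A_k)\sim c_2\,m(A_k)\sim c_3\,k^{-1/s-1}$, where $c_2$ is the value of that density at $0^+$. Consequently $\rho(\hat H)\sim c_4\,N^{-1/s}\sim c_5\,m(H)$, while the denominator $\sum_{k=1}^N k\,\rho(A_k)$ is comparable to $\sum_{k=1}^N k^{-1/s}$ and therefore converges to a positive constant when $s<1$, is $\sim c_6\log N\sim -c_7\log m(H)$ when $s=1$, and is $\sim c_8\,N^{1-1/s}\sim c_9\,m(H)^{1-s}$ when $s>1$. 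Dividing the two asymptotics produces the three regimes of \eqref{final-formula}.

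The step I expect to be the main obstacle is obtaining the precise asymptotic $\int R_N\,d\nu_N\sim\int R_N\,d\nu$ rather than merely $\asymp$: this requires a uniform-in-$k$ substitution of $\nu_N$ for $\nu$ on the H\"older family $f_k(\sigma)=\psi_W(k\sigma)h(k\sigma)$, with oscillations controlled by (W1) and cylinder measures compared via Lemma~\ref{lem-utile}(iii). The remaining ingredients---the scaling of $a_k$ coming from (H3) and the Gibbs--Lebesgue comparison on $\{A_k\}$ coming from the regularity of the $G$-invariant density---are essentially routine once this central estimate is in hand.
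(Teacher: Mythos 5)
Your proposal is correct and follows the same overall reduction as the paper: combine Theorem~\ref{thm:escape_H_hatH} with Proposition~\ref{prop-utile} to write $\gamma_\mu(H) \sim \gamma_\rho(\hat H) / \sum_{k=1}^N k\,\rho(A_k)$, then estimate numerator and denominator. Where you differ is in supplying direct arguments for the two asymptotic inputs that the paper handles by citation. The paper invokes \cite{KL} for $\gamma_\rho(\hat H) \sim \mathrm{const}\cdot m(H)$ and \cite[Lemma~10.4]{Is} for $a_N \sim \mathrm{const}\cdot N^{-1/s}$ and $\rho(A_N) \sim \mathrm{const}\cdot N^{-1-1/s}$; you instead prove $\gamma_\rho(\hat H) \sim \rho(\hat H)$ from the identity $(1-\lambda_N)\int h\,d\nu_N = \int R_N\,d\nu_N$ (pairing $\CN h = h - R_N$ with $\CN^*\nu_N = \lambda_N\nu_N$), and you derive the asymptotics of $a_k$ and $\rho(A_k)$ from (H3) and the regularity of the $G$-invariant density. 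That transfer-operator argument is a genuine, self-contained replacement for the Keller--Liverani input, and your proof that $\int R_N\,d\nu = \rho(\hat H)$ (via $\CM^*\nu = \nu$ applied to $\chi_{[k]}h$) is clean. The one step you flag as the potential obstacle --- replacing $\nu_N$ by $\nu$ in $\int R_N\,d\nu_N$ with a $\sim$ rather than $\asymp$ --- does close: since $\nu_N \to \nu$ in $\CH_\theta^*(\Sigma)$ (as recorded before Lemma~\ref{lem-utile}), and the normalised functions $f_k(\sigma) = \psi_W(k\sigma)h(k\sigma)$ satisfy $\|f_k\|_\theta \le C\,\rho(A_k)$ uniformly in $k$ by (W1) and Lemma~\ref{lem-utile}(ii), summing over $k > N$ gives $\bigl|\int R_N\,d(\nu_N - \nu)\bigr| \le \|\nu_N - \nu\|_{\CH_\theta^*}\cdot C\,\rho(\hat H) = o\bigl(\rho(\hat H)\bigr)$, as required. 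The net effect is that your argument is more self-contained than the published proof at the cost of length; the paper's route is shorter but leans on two external results that you have essentially reproved in the specific setting at hand.
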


\begin{proof}
In the proof, $const$ denote possibly different constants only depending on the system and not on the hole. By \cite{KL} we have that $\gamma_\rho(\hat{H}) \sim\, const\cdot m(H)$.

In addition, arguing as in \cite[Lemma 10.4]{Is}, we have
\begin{equation}\label{asymp-an}
m(H) = a_N \sim const \cdot N^{-\frac 1s}\, ,\qquad \rho(A_N) \sim const \cdot N^{-1-\frac 1s}
\end{equation}
as $N\to \infty$. Hence, using Proposition \ref{prop-utile} in \eqref{aim-1}, one finds 
\[
\gamma_\mu(H) \sim \frac{\gamma_\rho(\hat{H})}{\sum\limits_{k=1}^{N} k \cdot \, \rho(A_k)} \sim const\cdot \frac{m(H)}{N^{1-\frac 1s}} \sim const\cdot \frac{m(H)}{m(H)^{1-s}} \sim const\cdot m(H)^s\, .
\]
for $s>1$. Analogously,
\[
\sum\limits_{k=1}^{N} k \cdot \, \rho(A_k) \sim \sum\limits_{k=1}^{N} k^{-1} \sim \log N \sim -\log m(H)\, .
\]
implies the result for $s=1$.

The case $s\in (0,1)$ follows from \eqref{asymp-an}, since the denominator in $\gamma_\mu(H)$ is summable in this case.
\end{proof}

Finally, we discuss the case of a more general hole $H_\eps=[0,\eps]$ with $\eps>0$. 

\begin{corollary} \label{general-hole}
Let's consider the parabolic dynamical system on the interval satisfying (H1)-(H5) and (W1)-(W2), and fix a hole $H_\eps=[0,\eps]$ with $\eps>0$. Then, for $\eps$ small enough,
\[
\gamma_\mu(H_\eps) \asymp \left\{ \begin{array}{ll} m(H)\, , & \text{if $s\in (0,1)$;}\\[0.2cm] \frac{m(H)}{-\log m(H)}\, , & \text{if $s=1$;}\\[0.2cm] m(H)^s\, , & \text{if $s>1$.} \end{array} \right.
\]
\end{corollary}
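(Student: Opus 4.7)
My plan is to reduce Corollary \ref{general-hole} to Theorem \ref{thm:shrinking} by sandwiching the general hole $H_\eps = [0,\eps]$ between two consecutive Markov holes $[0,a_{N+1}]$ and $[0,a_N]$ from the sequence \eqref{preimages}.

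The first ingredient is the monotonicity of the escape rate with respect to set inclusion. If $H \subseteq H'$, then a point that does not escape into $H'$ by time $n$ also does not escape into $H$ by time $n$, so the survivor sets satisfy $S_n(H') \subseteq S_n(H)$ and hence $\mu(S_n(H')) \le \mu(S_n(H))$. Taking $-\log$ and $\limsup$ gives $\gamma_\mu(H) \le \gamma_\mu(H')$. Now, since $(a_n)$ is strictly decreasing to $0$, for every sufficiently small $\eps>0$ there is a unique $N=N(\eps)\ge 2$ with $a_{N+1} < \eps \le a_N$, so that $[0,a_{N+1}] \subseteq H_\eps \subseteq [0,a_N]$ and consequently
\[
\gamma_\mu([0,a_{N+1}]) \;\le\; \gamma_\mu(H_\eps) \;\le\; \gamma_\mu([0,a_N]).
\]

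The second ingredient is the asymptotic $a_N \sim const\cdot N^{-1/s}$ from \eqref{asymp-an}, which yields $a_{N+1}/a_N \to 1$. Combined with $a_{N+1} < \eps \le a_N$, this gives $a_N \asymp a_{N+1} \asymp \eps$ as $\eps \to 0^+$. Applying Theorem \ref{thm:shrinking} to the two Markov holes, both $\gamma_\mu([0,a_N])$ and $\gamma_\mu([0,a_{N+1}])$ are asymptotically equivalent (up to constants depending only on the system) to $f(a_N)$ and $f(a_{N+1})$ respectively, where $f(x)=x$ for $s\in(0,1)$, $f(x)=x/(-\log x)$ for $s=1$, and $f(x)=x^s$ for $s>1$. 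Each of these three functions has the property that $f(t) \asymp f(t')$ whenever $t \asymp t'$ as $t,t'\to 0^+$; for the logarithmic case one uses that $a_N \asymp \eps$ implies $-\log a_N = -\log\eps + O(1)$, so $-\log a_N \asymp -\log\eps$. Sandwiching therefore gives $\gamma_\mu(H_\eps) \asymp f(\eps) = f(m(H_\eps))$, as claimed.

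I do not expect any serious obstacle: the whole argument is a monotonicity-plus-sandwich reduction, and the only reason the conclusion weakens from $\sim$ to $\asymp$ is that the upper and lower Markov approximations of $H_\eps$ have different (though comparable) Lebesgue measure, which precludes extracting a single multiplicative constant. The mildly delicate point is handling the $s=1$ case carefully, but the observation that $-\log a_N$ and $-\log\eps$ differ by $O(1)$ settles it.
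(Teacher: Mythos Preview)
Your proposal is correct and follows essentially the same approach as the paper: sandwich $H_\eps$ between the consecutive Markov holes $[0,a_{N+1}]\subset H_\eps \subseteq [0,a_N]$, use monotonicity of the escape rate, apply Theorem~\ref{thm:shrinking} to the two Markov holes, and conclude via $a_{N+1}/a_N\to 1$ (equivalently $a_N\asymp a_{N+1}\asymp \eps$). The paper writes out the resulting inequalities explicitly for each regime of $s$, whereas you phrase the last step slightly more abstractly via the property $f(t)\asymp f(t')$ whenever $t\asymp t'$, but the content is identical.
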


\begin{proof}
It is immediate that for all $\eps>0$ there exists $N_\eps \in \N$ such that $a_{N_\eps+1} < \eps \le a_{N_\eps}$. Hence, we find two Markov holes, $H_{N_\eps+1} = [0,a_{N_\eps+1}]$ and $H_{N_\eps} = [0,a_{N_\eps}]$, such that $H_{N_\eps+1} \subset H_\eps \subseteq H_{N_\eps}$. 

The definition of the escape rate \eqref{escape-rate-F} implies that $\gamma_\mu(H_{N_\eps+1})\le \gamma_\mu(H_\eps)\le \gamma_\mu(H_{N_\eps})$. Hence, we can use \eqref{final-formula} to obtain a result about the behaviour of $\gamma_\mu(H_\eps)$ as for $\eps$ small enough.

By \eqref{asymp-an}, as $N\to \infty$,
\[
\frac{a_{N+1}}{a_N} \sim \Big( \frac{N}{N+1} \Big)^{\frac 1s} \asymp 1\, .
\]

Let's consider first the case $s> 1$. Then, there exists a constant $C>0$ such that for $\eps$ small enough, hence $N$ big enough,
\[
\frac 1C\, \Big( \frac{a_{N_\eps+1}}{a_{N_\eps}} \Big)^s \le \frac 1C\, \Big( \frac{a_{N_\eps+1}}{\eps} \Big)^s \le \frac{\gamma_\mu(H_{N_\eps+1})}{m(H_\eps)^s} \le \frac{\gamma_\mu(H_\eps)}{m(H_\eps)^s} \le \frac{\gamma_\mu(H_{N_\eps})}{m(H_\eps)^s} \le C\, \Big( \frac{a_{N_\eps}}{\eps} \Big)^s \le C\, \Big( \frac{a_{N_\eps}}{a_{N_\eps+1}} \Big)^s\, .
\]
For $s=1$, the same argument leads to 
\[
\frac 1C\, \frac{a_{N_\eps+1}/\log \frac{1}{a_{N_\eps+1}}}{a_{N_\eps}/\log \frac{1}{a_{N_\eps}}} \le \frac 1C\, \frac{a_{N_\eps+1}/\log \frac{1}{a_{N_\eps+1}}}{\eps/\log \frac{1}{\eps}} \le \frac{\gamma_\mu(H_\eps)}{m(H_\eps)/\log \frac{1}{m(H_\eps)}} \le C\, \frac{a_{N_\eps}/\log \frac{1}{a_{N_\eps}}}{\eps/\log \frac{1}{\eps}} \le C\, \frac{a_{N_\eps}/\log \frac{1}{a_{N_\eps}}}{a_{N_\eps+1}/\log \frac{1}{a_{N_\eps+1}}}\, .
\]
The same argument for $s\in (0,1)$ concludes the proof.
\end{proof}


\end{document}